\newtheorem{remark}{Remark}
\newtheorem{theorem}{Theorem}
\newtheorem{prop}{Proposition}
\begin{document}

\title{\LARGE \bf 
Sub-predictors \textcolor{blue}{and} classical predictors for finite-dimensional observer-based control of parabolic PDEs}
\author{Rami Katz and Emilia Fridman,~\IEEEmembership{Fellow,~IEEE}
\thanks{Supported by  Israel Science Foundation (grant no. 673/19) and
	by Chana and Heinrich Manderman Chair at Tel Aviv University.}
\thanks{R. Katz ({\tt\small rami@benis.co.il}) and E. Fridman ({\tt\small  emilia@eng.tau.ac.il}) are with the School of Electrical Engineering, Tel Aviv University, Israel.}
}
\maketitle

\begin{abstract}
We study constant input delay compensation by using  finite-dimensional observer-based controllers in the case of the 1D heat equation. We consider Neumann actuation with nonlocal measurement and employ modal decomposition with $N+1$ modes in the observer. We introduce a chain of $M$ sub-predictors that leads to a closed-loop ODE system
coupled with infinite-dimensional tail. Given an input delay $r$, we present LMI stability conditions for finding $M$ and $N$ and the resulting exponential decay rate and prove that the LMIs are always feasible for
any $r$. We also consider a classical observer-based predictor and show that the corresponding LMI stability conditions are feasible for any $r$ provided $N$ is large enough.
A numerical example demonstrates that the classical predictor leads to a lower-dimensional observer. However, it is known to be hard for implementation due to the distributed input signal.
\end{abstract}
\begin{keywords} Distributed parameter systems,	observer-based control, time-delay
\end{keywords}

\section{Introduction}
Finite-dimensional observer-based  controllers for parabolic systems were designed by the modal decomposition approach in \cite{balas1988finite, christofides2001, curtain1982finite,harkort2011finite,
nambu2017theory}.
Recently, the first constructive LMI-based method for finite-dimensional observer-based controller was suggested in \cite{RamiContructiveFiniteDim}
for the 1D heat equation under nonlocal or Dirichlet actuation and nonlocal measurement.
The observer dimension $N$ and the resulting exponential decay rate
were found from simple LMI conditions.
Finite-dimensional observer-based control of the  Kuramoto-Sivashinsky equation with boundary actuation and point
measurement was studied in \cite{Rami_CDC20}.

Robustness with respect to small delays and/or sampling intervals for the heat equation was studied in \cite{Aut12,NetzerAut14} 
for distributed static output-feedback control, in \cite{karafyllis2018sampled} for boundary state-feedback and in \cite{selivanov2018delayed,katz2020boundary}
for boundary  controller based on PDE observer.
Delayed implementation of finite-dimensional observer-based controllers for the 1D heat equation
was introduced in \cite{katz2020constructiveDelay}, where in case of time-varying output delay, a combination of Lyapunov functionals with Halanay's inequality appeared to be an efficient tool.

To compensate large input/output delay,
there are two main predictor  methods: the classical predictor, which is based on a reduction approach \cite{Artstein82} or  the backstepping approach \cite{Krstic09} and sub-predictors or chain of observers \cite{germani2002new,ahmed2011cascade,najafi2013closed,zhu2021sub}. The classical predictors for state-feedback control of PDEs were suggested  in \cite{Krstic09,sano2018neumann,lhachemi2019lmi}. For the heat equation, a PDE sub-predictor (an observer of the future state) was presented in \cite{selivanov2018delayed}. A chain of observers for the estimation of heat equation with a large output delay was designed in \cite{ahmed2019observer}.

In the recent paper \cite{katzreduced21}, reduced-order LMI stability conditions were introduced for finite-dimensional observer-based control. This  was presented for the heat equation with Neumann
actuation and non-local measurement. The dimension of the LMIs does not grow with the dimension of the observer $N$. Moreover, feasibility of the LMIs for $N$ implies their feasibility for $N+1$.
In \cite{katzreduced21}, the classical predictor was extended to finite-dimensional observer-based control. This predictor compensated delay in the finite-dimensional controller, whereas the  infinite-dimensional part still depended on the large input delay. It was shown in a numerical example that the predictor allows for larger delays. However, the feasibility of LMIs for arbitrary delays was not proved due to complexity of the analysis in the presence of time-varying output delay.

The present paper is dedicated to predictor methods for finite-dimensional observer-based control of parabolic PDEs with constant input delay $r$.
As in \cite{katzreduced21}, we consider the 1D heat equation under Neumann actuation and non-local measurement. The main novelty is in use of sub-predictors for such a system. We show that for any $r$ there exists a chain of $M$ sub-predictors and a large enough number of modes $N+1$  employed in observer that guarantee the stability of the  closed-loop system. We present LMI stability conditions for finding $M$, $N$ and the resulting exponential decay rate.
We prove that these LMIs are always feasible for all $r$ and large enough $M$ and $N$. We also consider the classical predictor which compensates the delay in the finite-dimensional part, as introduced in \cite{katzreduced21} (if the time-varying input/ouput delays are omitted). \textcolor{blue}{This is the first time that feasibility guarantees for the resulting LMIs with arbitrary delays are proved for both sub-predictors and predictors. This proof is challenging, due to coupling in the closed-loop system.} A numerical example demonstrates that for the same $N$, the classical predictor allows larger delays found from the LMIs, whereas for the same delay they employ lower-dimensional observers than the sub-predictors. However, as is well-known \cite{MondieWim03,karafyllis2017predictor,furtat2017disturbance}, they are harder to implement, due to the distributed input term 
which should be carefully discretized. \textcolor{blue}{This paper is an essential step towards the use of sub-predictors and classical predictors for delay compensation in PDEs, via finite-dimensional observers}.

\vspace{0.1cm}
\emph{Notations and preliminaries:}
$ L^2(0,1)$ is the Hilbert space of Lebesgue measurable and square integrable functions $f:[0,1]\to \mathbb{R} $  with the inner product $\left< f,g\right>:=\scriptsize{\int_0^1 f(x)g(x)dx}$ and induced norm $\left\|f \right\|^2:=\left<f,f \right>$.
$H^1(0,1)$ is the space of functions  $f:[0,1]\to \mathbb{R} $ with square integrable weak derivative, with the norm $\left\|f \right\|^2_{H^1}:=\sum_{j=0}^{1} \left\|f^{(j)} \right\|^2$.
The Euclidean norm on $\mathbb{R}^n$ is denoted by $\left|\cdot \right|$. For  $P \in \mathbb{R}^{n \times n}$, $P>0$ means that $P$ is symmetric and positive definite. The sub-diagonal elements of a symmetric matrix will be denoted by $*$.  $\otimes$ is the standard Kronecker product. For $U\in \mathbb{R}^{n\times n}, \ U>0$ and $x\in \mathbb{R}^n$ let $\left|x\right|^2_U=x^TUx$. $\mathbb{Z}_+$ is the set of nonnegative integers.

Recall that the Sturm-Liouville eigenvalue problem
\begin{equation}\label{eq:SL}
	\begin{array}{lll}
		&\phi''+\lambda \phi = 0,\ \ x\in [0,1],\ \ \phi'(0)=\phi'(1)=0,
	\end{array}
\end{equation}
induces a sequence of eigenvalues $\scriptsize \lambda_n = n^2\pi^2, n\in \mathbb{Z}_+$ with corresponding eigenfunctions
\begin{equation}\label{eq:Eigenfunctions}
	\begin{array}{lll}
		&\phi_0(x) = 1, \ \phi_n(x)=\sqrt{2}\cos\left(\sqrt{\lambda_n} x\right), n\geq 1.
	\end{array}
\end{equation}
The eigenfunctions form a complete orthonormal system in $L^2(0,1)$.
Given $N\in \mathbb{Z}_+$ and $h\in L^2(0,1)$ satisfying $h \overset{L^2}{=} \sum_{n=0}^{\infty}h_n\phi_n$ we denote
$\left\|h\right\|_N^2 =\sum_{n=N+1}^{\infty}h_n^2.$
\section{Sub-predictors vs classical predictors}
We consider the PDE
\begin{equation}\label{eq:PDEPointActIntervalDelayedSub}
	\begin{aligned}
		& z_t(x,t)=z_{xx}(x,t)+qz(x,t),\ x\in[0,1],\ t\geq 0,\\
		& z_x(0,t)=0, \quad z_x(1,t)=u(t-r)
	\end{aligned}
\end{equation}
under delayed Neumann actuation with known delay $r$ and non-local measurement
\begin{equation}\label{eq:BoundMeasPred}
	y(t) = \left<c,z(\cdot,t)\right>, \ t\geq 0
\end{equation}
with $c\in L^2(0,1)$. To compensate the delay, we will present in this section both sub-predictors and classical predictors.

Using modal decomposition, we present the solution to \eqref{eq:PDEPointActIntervalDelayedSub} as
\begin{equation}\label{eq:zSolDecompSub}
	z(x,t)\overset{L^2}{=}\sum_{n=0}^{\infty}z_n(t)\phi_n(x), \ \  z_n(t) = \left<z(\cdot,t),\phi_n\right>
\end{equation}
with $\phi_n, \ n\in \mathbb{Z}_+$ given in \eqref{eq:Eigenfunctions}. Differentiating under the integral, integrating by parts and using \eqref{eq:SL} and \eqref{eq:Eigenfunctions} we obtain {\color{blue}(similar to \cite{karafyllis2018sampled} and the references therein)}
\begin{equation}\label{eq:znDelayedSub}
	\begin{aligned}
		&\dot{z}_n(t) = (- \lambda_n + q) z_n(t) + b_n u(t-r),\ t\geq 0 \\
		&b_0 = 1, \ \ b_n = {(-1)}^n \sqrt{2}, \ n\geq 1.
	\end{aligned}
\end{equation}
Let $\delta>0$ be a desired decay rate. Since $\lim_{n \to \infty}\lambda_n=\infty$, there exists some $N_0 \in \mathbb{Z}_+$ such that
\begin{equation}\label{eq:N0Sub}
	-\lambda_n+q<-\delta, \quad n>N_0.
\end{equation}	

Let
\begin{equation}\label{eq:C0A0Sub}
	\begin{aligned}
		&A_0 = \operatorname{diag}\left\{-\lambda_0+q,\dots,-\lambda_{N_0}+q \right\},\\
		&L_0 = \left[l_0,\dots,l_{N_0} \right]^T,\ B_0:=\begin{bmatrix}b_0& \dots & b_{N_0} \end{bmatrix}^T\\
		&C_0=\left[c_0,\dots,c_{N_0} \right],\ c_n=\left<c,\phi_n\right>, \ \ n \in \mathbb{Z}_+.
	\end{aligned}
\end{equation}
Assume that
\begin{equation}\label{eq:AsscnNonDelayedSub}
	c_n \neq 0 , \quad 0\leq n \leq N_0.
\end{equation}
Then $(A_0,C_0)$ is observable, by the Hautus lemma. We choose $L_0 = [l_0,\dots, l_{N_0}]^T$ which satisfies the following Lyapunov inequality:
\begin{equation}\label{eq:GainsDesignLSub}
	P_{\text{o}}(A_0-L_0C_0)+(A_0-L_0C_0)^TP_{\text{o}} < -2\delta P_{\text{o}},
\end{equation}
where $0<P_{\text{o}}\in \mathbb{R}^{(N_0+1)\times (N_0+1)}$.

Similarly, by the Hautus lemma, $b_n\neq 0,\ n\in \mathbb{Z}_+$ implies that $(A_0,B_0)$ is controllable. Let $K_0\in \mathbb{R}^{1\times (N_0+1)}$ satisfy
\begin{equation}\label{eq:GainsDesignKSub}
	\begin{aligned}
		&P_{\text{c}}(A_0-B_0K_0)+(A_0-B_0K_0)^TP_{\text{c}} < -2\delta P_{\text{c}},
	\end{aligned}
\end{equation}
where $0<P_{\text{c}}\in \mathbb{R}^{(N_0+1)\times (N_0+1)}$.

 {\color{blue} In our finite-dimensional observer-based predictor design, the closed-loop system will be presented as a coupled system of ODEs and the infinite-dimensional tail. This complicates the proof of stabilization for all $r>0$ under higher-dimensional observers.}

Given $N\geq N_0$ denote
\begin{equation}\label{eq:ObsLastModesSub}
	\begin{array}{lll}
		&{z}^{N_0}(t) = \text{col}\left\{z_i\right\}_{i=0}^{N_0}, \ \hat{z}^{N-N_0}(t)= \text{col}\left\{\hat{z}_i\right\}_{i=N_0+1}^N,\\
		& A_1 =  \operatorname{diag}\left\{-\lambda_{N_0+1}+q,\dots,-\lambda_N+q \right\},\\
		& B_1 = \left[b_{N_0+1},\dots,b_N \right]^T,\ C_1=\left[c_{N_0+1},\dots,c_{N} \right].
	\end{array}
\end{equation}
\subsection{Sub-predictors}
In order to deal with a large delay $r$, we subdivide $r$ into $M$ parts of equal size $\frac{r}{M}$, where $M\in \mathbb{Z}_+, \ M\geq 1$.
We first consider $M\ge 2$ and  employ a chain of sub-predictors (observers of the future state)
\begin{equation}\label{eq:ChainSubPred}
	\begin{array}{lll}
		&\hat{z}^{N_0}_1(t-r)\mapsto \cdots\mapsto\hat{z}^{N_0}_i\left(t-\frac{M-i+1}{M}r\right)\mapsto\cdots\\
		&\mapsto\hat{z}^{N_0}_{M}\left(t-\frac{1}{M}r\right)\mapsto {z}^{N_0}(t).
	\end{array}
\end{equation}
Here  $\hat{z}^{N_0}_i\left(t-\frac{M-i+1}{M}r\right)\mapsto\hat{z}^{N_0}_{i+1}\left(t-\frac{M-i}{M}r\right)$ means that $\hat{z}^{N_0}_i(t)$ predicts  the value of $\hat{z}^{N_0}_{i+1}(t+\frac{r}{M})$. Similarly, $\hat{z}^{N_0}_M(t)$ predicts the value of $z^{N_0}(t+\frac{r}{M})$. The sub-predictors satisfy the following ODEs for $t\geq 0$
\begin{equation}\label{eq:SubpredODEs}
	\begin{array}{lll}
		&\dot{\hat{z}}^{N_0}_M(t)=A_0\hat{z}^{N_0}_M(t)+B_0u\left(t-\frac{M-1}{M}r\right)\\
		&\hspace{3mm}-L_0\left[C_0\hat{z}^{N_0}_M\left(t-\frac{r}{M}\right)+C_1\hat{z}^{N-N_0}(t)-y(t) \right],\\
		&\dot{\hat{z}}^{N_0}_i(t) = A_0\hat{z}^{N_0}_i(t)+B_0u\left(t-\frac{i-1}{M}r \right)\\
		&\hspace{3mm}-L_0C_0\left[\hat{z}_i^{N_0}\left(t-\frac{r}{M}\right)-\hat{z}^{N_0}_{i+1}(t) \right], \ 1\leq i\leq M-1,\\
		& \hat{z}^{N_0}_i(t)=0, \quad t\leq 0, \ 1\leq i\leq M,
	\end{array}
\end{equation}
whereas
$\hat{z}^{N-N_0}(t)$ satisfies the following ODE
\begin{equation}\label{eq:obsODENonDelayedSubLastODEs}
	\begin{array}{lll}
		&\dot{\hat{z}}^{N-N_0}(t) = A_1\hat{z}^{N-N_0}(t) + B_1u(t-r),\\
		&\hat{z}^{N-N_0}(t)=0, \quad t\leq 0.
	\end{array}
\end{equation}
The finite-dimensional observer $\hat z(x,t)$ of the state $z(x,t)$, based on $(M-1)(N_0+1)+N+1$-dimensional system of ODEs \eqref{eq:SubpredODEs}-\eqref{eq:obsODENonDelayedSubLastODEs}, is given by
\begin{equation}\label{eq:ZhatSeries0Sub}
	\begin{array}{lll}
		&\hat{z}(x,t)= \hat{z}^{N_0}_1(t-r)\cdot \text{col}\left\{\phi_0(x),\dots,\phi_{N_0}(x)\right\}\\
		&\hspace{8mm}+\hat{z}^{N-N_0}(t)\cdot\text{col}\left\{\phi_{N_0+1}(x),\dots,\phi_{N}(x)\right\}.
	\end{array}
\end{equation}
The controller is further chosen as
\begin{equation}\label{eq:ControllerSub}
	u(t)=-K_0{\hat{z}}^{N_0}_1(t).
\end{equation}
In particular, \eqref{eq:SubpredODEs} implies $u(t)=0$ for $t\leq 0$.

For well-posedness we introduce the change of variables $w(x,t)=z(x,t)-\frac{1}{2}x^2u(t-r)$. Then, the closed-loop system is presented as
\begin{equation}\label{eq:PDEChangeVar}
	\begin{array}{lll}
		&\hspace{-5mm}w_t(x,t)=w_{xx}(x,t)+qw(x,t)+f(x,t), \ x\in [0,1], \ t\geq 0,\\
		&\hspace{-5mm}w_x(0,t)=0, \quad w_x(1,t)=0,\\
		&\hspace{-5mm}f(x,t) = -\frac{1}{2}x^2\dot{u}(t-r)+\left(\frac{q}{2}x^2+1\right)u(t-r),
	\end{array}
\end{equation}
the ODEs \eqref{eq:SubpredODEs} and \eqref{eq:ControllerSub}. Let $z(\cdot,0)=w(\cdot,0)\in H^1(0,1)$. We apply the step method on $\left\{[jr,(j+1)r]\right\}_{j=0}^{\infty}$. For $t\in [0,r]$ we have that $f(x,t)\equiv 0$. By Theorems 6.3.1 and 6.3.3 in \cite{pazy1983semigroups}, \eqref{eq:PDEChangeVar} has a unique classical solution $z=w\in C([0,r],L^2(0,1))\cap C^1((0,r],L^2(0,1))$ such that $w(\cdot,t)\in H^2(0,1)$ with $w_x(0,t)=w_x(1,t)=0$ for $t\in(0,r]$. Furthermore, since $u(t-r)\equiv 0$ for $t\in [0,r]$, \eqref{eq:obsODENonDelayedSubLastODEs} implies that $\hat{z}^{N-N_0}(t)\in C^1[0,r]$. Since $y\in C[0,r]$, considering  \eqref{eq:SubpredODEs} on the
subintervals $\left\{[\frac{j}{M}r,\frac{(j+1)}{M}r]\right\}_{j=0}^{M-1}$, it can be seen that $\hat{z}^{N_0}_i\in C^1[0,r], \ 1\leq i \leq M$.
Furthermore, $\dot{\hat{z}}^{N_0}_1$ is Lipschitz for $t\in [0,r]$. Next, we consider $t\in [r,2r]$. Since $\hat{z}^{N_0}_1(t)\in C^1[0,r]$, with $\dot{\hat{z}}^{N_0}_1(t)$  Lipschitz on $[0,r]$, we have that $f(x,t)$ is Lipschitz on $[r,2r]$. By Theorems 6.3.1 and 6.3.3 in \cite{pazy1983semigroups}, \eqref{eq:PDEChangeVar} has a unique classical solution for $t\in[r,2r]$. Continuing step-by-step and using $z(x,t)=w(x,t)+\frac{1}{2}x^2u(t-r)$, \eqref{eq:PDEPointActIntervalDelayedSub} has a unique solution
$z\in C([0,\infty),L^2(0,1))\cap C^1((0,\infty)\setminus \mathcal{J},L^2(0,1))$, where $\mathcal{J}=\left\{\frac{jr}{M}\right\}_{j=0}^{\infty}$. Moreover, $z(\cdot,t)\in H^2(0,1)$ with $z_x(0,t)=0, \ z_x(1,t)=u(t-r)$ for $t\in[0,\infty)$.

Define the estimation errors for $1\leq i \leq M-1$ as follows:
\begin{equation}\label{eq:EstErrSub}
	\begin{array}{lll}
		&e^{N-N_0}(t) = \text{col}\left\{z_{N_0+1}(t),\dots,z_N(t)\right\}-\hat{z}^{N-N_0}(t),\\
		&e^{N_0}_M(t)={z}^{N_0}(t)-\hat{z}^{N_0}_{M}\left(t-\frac{1}{M}r\right),\\
		& e^{N_0}_i(t) = \hat{z}^{N_0}_{i+1}\left(t-\frac{M-i}{M}r\right)-\hat{z}^{N_0}_{i}\left(t-\frac{M-i+1}{M}r\right).
	\end{array}
\end{equation}
From \eqref{eq:SubpredODEs} and \eqref{eq:EstErrSub} we have
\begin{equation}\label{eq:NoDelayControlPresent}
	\begin{array}{l}
		\hat z_1^{N_0}(t-r)+\sum_{i=1}^Me^{N_0}_i(t)=z^{N_0}(t).
	\end{array}
\end{equation}
In particualr, if the errors $e^{N_0}_i(t), \ 1\leq i \leq M$ converge to zero, we have $\hat{z}^{N_0}_1(t)\rightarrow {z}^{N_0}(t+r)$, meaning that $\hat{z}_1^{N_0}(t)$ sequentially forecasts the future system state $z^{N_0}(t+r)$. Using \eqref{eq:BoundMeasPred}, \eqref{eq:C0A0Sub} and \eqref{eq:EstErrSub}, the innovation term in the ODE for $\hat{z}^{N_0}_M(t)$ (see \eqref{eq:SubpredODEs}), can be presented as
\begin{equation}\label{eq:InnovTermSub}
	\begin{array}{lll}
		&C_0\hat{z}^{N_0}_M\left(t-\frac{r}{M}\right)+C_1\hat{z}^{N-N_0}(t)-y(t)\\
		&\hspace{10mm} = -C_0e^{N_0}_M(t)-C_1e^{N-N_0}(t)-\zeta(t),\\
		&\zeta(t) = \sum_{n=N+1}^{\infty}c_nz_n(t).
	\end{array}
\end{equation}
By the Cauchy-Schwarz inequality we have
\begin{equation}\label{eq:ZetaEstBoundartActSub}
	\begin{array}{lll}
		\zeta^2(t) &\leq \left\|c \right\|_{N}^2\sum_{n=N+1}^{\infty}z_n^2(t).
	\end{array}
\end{equation}
Using \eqref{eq:znDelayedSub}, \eqref{eq:SubpredODEs} and \eqref{eq:InnovTermSub} we obtain the following dynamics of the estimation errors for $t\geq 0$
\begin{equation}\label{eq:ErrEqSub}
	\begin{array}{lll}
		&\dot{e}_M^{N_0}(t) = A_0e^{N_0}_M(t)-L_0C_0e^{N_0}_M(t-\frac{r}{M})\\
		&\hspace{10mm}-L_0C_1e^{N-N_0}\left(t-\frac{r}{M}\right)-L_0\zeta(t-\frac{r}{M}),\\
		&\dot{e}_{M-1}^{N_0}(t) = A_0e^{N_0}_{M-1}(t)-L_0C_0e^{N_0}_{M-1}\left(t-\frac{r}{M}\right)\\
		&\hspace{10mm}+L_0C_0e^{N_0}_M\left(t-\frac{r}{M}\right)+L_0C_1e^{N-N_0}\left(t-\frac{r}{M} \right)\\
		&\hspace{10mm}+L_0\zeta(t-\frac{r}{M}),\\
		&\dot{e}_{i}^{N_0}(t) = A_0e^{N_0}_i(t)-L_0C_0e^{N_0}_i\left(t-\frac{r}{M}\right)\\
		&\hspace{10mm}+L_0C_0e^{N_0}_{i+1}\left(t-\frac{r}{M}\right), \ 1\leq i \leq M-2
	\end{array}
\end{equation}
and
\begin{equation}\label{eq:ErrEqSub1}
	\dot{e}^{N-N_0}(t) = A_1e^{N-N_0}(t).
\end{equation}
From \eqref{eq:znDelayedSub}, \eqref{eq:ControllerSub} and \eqref{eq:NoDelayControlPresent},  $z^{N_0}(t)$ satisfies
\begin{equation}\label{eq:dzn0clM}
	\begin{aligned}
		\dot {z}^{N_0}(t) = (A_0-B_0K_0) z^{N_0}(t)+ B_0K_0\sum_{i=1}^Me^{N_0}_i(t).
	\end{aligned}
\end{equation}
We introduce the notations
\begin{equation*}\label{eq:notationsDelayMSubNew}
	\begin{array}{lll}
		&\hspace{-3mm} X_e(t) = \text{col}\left\{e^{N_0}_i(t)\right\}_{i=1}^M,\ \nu_e(t) = X_e\left(t-\frac{r}{M}\right)-X_e(t),\\
		&\hspace{-3mm}F_e = \operatorname{diag}\scriptsize\left\{I_{M-1}\otimes (A_0-L_0C_0)+ J_{M-1}(0)\otimes L_0C_0, A_0-L_0C_0\right\}, \\
		&\hspace{-3mm}G_e = \operatorname{diag}\scriptsize\left\{I_{M-1}\otimes (-L_0C_0)+ J_{M-1}(0)\otimes L_0C_0,-L_0C_0\right\}, \\
		&\hspace{-3mm}\mathcal{L}_e = \text{col}\left\{0_{(M-2)(N_0+1) \times 1}, L_0,-L_0\right\},\\
		&\hspace{-3mm}\mathcal{K}_e = \begin{bmatrix} K_0, ...,  K_0\end{bmatrix}\in \mathbb{R}^{1\times M(N_0+1)}
	\end{array}
\end{equation*}
Where $J_{M-1}(0)$ is a Jordan block of order $M-1$ with zero diagonal. Note that \eqref{eq:ErrEqSub1} implies
$e^{N-N_0}\left(t-{\scriptsize\frac{r}{M}}\right) = e^{-A_1\frac{r}{M}}e^{N-N_0}(t).$
Then, using \eqref{eq:znDelayedSub}, \eqref{eq:ControllerSub}, \eqref{eq:ErrEqSub} and \eqref{eq:dzn0clM}, the reduced-order (i.e, decoupled from $\hat{z}^{N-N_0}(t)$) closed-loop system can be presented as
\begin{equation}\label{eq:XMDelayNew}
	\begin{array}{llllll}
		& \dot {z}^{N_0}(t) = (A_0-B_0K_0) z^{N_0}(t)+ B_0\mathcal{K}_eX_e(t) \\
		&\dot{X}_e(t) =  F_eX_e(t)+G_e \nu_e(t) +\mathcal{L}_e\zeta\left(t-\frac{r}{M}\right)\\
		&\hspace{10mm} +\mathcal{L}_e C_1e^{-A_1\frac{r}{M}}e^{N-N_0}(t),\\
		&\dot{z}_n(t)= (-\lambda_n+q)z_n(t)-b_nK_0z^{N_0}(t),\\
		&\hspace{10mm}+b_n\mathcal{K}_eX_e(t),\quad  n>N.
	\end{array}
\end{equation}
In the case $M=1$, $\hat{z}^{N_0}(t)$ satisfies the first ODE in \eqref{eq:SubpredODEs} and predicts $z^{N_0}(t+r)$. Here $X_e(t)= e^{N_0}_1(t)$ and the closed-loop system has the form \eqref{eq:ErrEqSub1} and \eqref{eq:XMDelayNew}, where now
\begin{equation*}
	F_e = A_0-L_0C_0,\ G_e = -L_0C_0, \ \mathcal{K}_e=K_0, \ \mathcal{L}_e = -L_0.
\end{equation*}

Differently from the existing finite-dimensional controllers \cite{RamiContructiveFiniteDim,katz2020constructiveDelay}, where the closed-loop systems is written in terms of the observer and the tail $z_n(t) \ (n>N)$,
here 
\eqref{eq:XMDelayNew} is presented in terms of the state $z^{N_0(t)}$, the estimation errors $X_e(t)$ and the tail. This allows to eliminate the delay $r$ from the ODEs of $z^{N_0}(t)$ and $z_n(t), \ n>N$ while decreasing it to  $\frac{r}{M}$ (which is small for large $M$) in the ODE of $X_e(t)$. 
{\color{blue}
\begin{remark}\label{rem_ODEs}
In the case of sub-predictors for linear ODEs, the closed-loop system is given by \eqref{eq:ErrEqSub} and \eqref{eq:dzn0clM}, where $\zeta=0$ and $e^{N-N_0}=0$.
Thus, exponential stability of
\begin{equation}\label{eq:e_M}
	\dot{e}_M^{N_0}(t) = (A_0-L_0C_0)e_M^{N_0}(t)-L_0C_0\nu_{e,M}(t),
\end{equation}
where $\nu_{e,M}(t)=e_M^{N_0}(t-{r\over M})-e_M^{N_0}(t)$, guarantees the stability of the closed-loop system due to ISS of $e_i^{N_0} (1\le i\le M-1)$ systems  with respect to $e_{i+1}^{N_0}$. This is different from the infinite-dimensional closed-loop system \eqref{eq:XMDelayNew}, where the finite-dimensional part of the system is coupled via $\zeta(t)$ with infinite-dimensional tail $z_n (n>N)$. Here the proof of stabilization for any delay $r>0$ provided $M$ and $N$ are large enough becomes challenging.
\end{remark}}

For $L^2$-stability analysis of \eqref{eq:ErrEqSub1} and \eqref{eq:XMDelayNew} we define the Lyapunov functional
\begin{equation}\label{eq:VComponents0New}
	\begin{array}{lll}
		&V(t):=V_0(t)+V_{e}(t)+ V_{Q}(t)+p_e\left|e^{N-N_0}(t)\right|^2,\\
		&V_0(t)=\left|z^{N_0}(t) \right|^2_{P_0}+\sum_{n=N+1}^{\infty}z^2_n(t),\\
		&V_{Q}(t)=Q\int_{t-{r\over M}}^{t} e^{-2\delta(t-s)}\zeta^2(s) ds,\\[0.2cm]
		&V_e(t) = \left|X_e(t) \right|^2_{P_e}+ V_{S_e,X_e}(t)+V_{R_e,X_e}(t)
	\end{array}
\end{equation}
Here $0<P_0\in \mathbb{R}^{(N_0+1)\times (N_0+1)}$, $0<Q,p_e\in \mathbb{R}$, $0<P_e\in \mathbb{R}^{M(N_0+1)\times M(N_0+1)}$ and 
\begin{equation}\label{eq:VComponentsRNew}
	\begin{array}{llll}
		&V_{S_e,X_e}(t):=\int_{t-{r\over M}}^{t} e^{-2\delta(t-s)}\left| X_e(s)\right|_{S_e}^2 ds,\\
		&V_{R_e,X_e}(t):= {r\over M} \int_{-{r\over M}}^{0} \int_{t+\theta}^t e^{-2\delta(t-s)} \left|\dot{X}_e(s)\right|^2_{R_e} ds d\theta,
	\end{array}
\end{equation}
where $0<S_0, R_0 \in \mathbb{R}^{M(N_0+1)\times M(N_0+1)}$. $V_0(t)$ compensates $\zeta(t)$ using \eqref{eq:ZetaEstBoundartActSub}, whereas $V_e(t)$ compensates $\frac{r}{M}$ in the ODEs of the estimation errors. Differentiation of $V_Q(t)$ gives
\begin{equation}\label{eq:VQDiffNew}
	\dot{V}_Q+2\delta V_Q = Q\zeta^2(t)-\epsilon_MQ\zeta^2\left(t-\frac{r}{M}\right), \ \epsilon_M = e^{-2\delta \frac{r}{M}}.
\end{equation}
Differentiating $V_0(t)$ along \eqref{eq:XMDelayNew} we obtain
\begin{equation}\label{eq:VnomDelayNew}
	\begin{array}{lll}
		&\hspace{-2mm}\dot{V}_0+2\delta V_0 =  \left(z^{N_0}\right)^T(t)\left[2\delta P_0+P_0\left(A_0-B_0K_0\right) + \right.\\
		&\hspace{-2mm}\left.\left(A_0-B_0K_0 \right)^TP_0\right]z^{N_0}(t)+2\left(z^{N_0}\right)^T(t)P_0B_0\mathcal{K}_eX_e(t)\\
		&\hspace{-2mm}+2\sum_{n=N+1}^{\infty}\left(-\lambda_n+q+\delta \right)z_n^2(t)\\
		&\hspace{-2mm}+2\sum_{n=N+1}^{\infty}z_n(t)b_n\left[\mathcal{K}_eX_e(t)-K_0z^{N_0}(t)\right].
	\end{array}
\end{equation}	
Let $\alpha_1,\alpha>0$. By the Cauchy-Schwarz inequality we have
\begin{equation}\label{eq:CrosTermNonDelayedNew}
	\begin{array}{lll}
		&2\sum_{n=N+1}^{\infty}z_n(t)b_n\left[\mathcal{K}_eX_e(t)-K_0z^{N_0}(t)\right]\\
		&\leq \left(\frac{1}{\alpha}+\frac{1}{\alpha_1}\right)\sum_{n=N+1}^{\infty}\lambda_nz_n^2(t)+\frac{2\alpha}{N\pi^2}\left|K_0z^{N_0}(t) \right|^2\\
		&\hspace{5mm}+\frac{2\alpha_1}{N\pi^2}\left|\mathcal{K}_eX_e(t) \right|^2
	\end{array}
\end{equation}
where we used the value of $b_n$, given in \eqref{eq:znDelayedSub}, and the estimate
\begin{equation}\label{eq:CrossTermsNew}
	\sum_{n=N+1}^{\infty}\frac{b_n^2}{\lambda_n}\leq \frac{2\alpha}{\pi^2}\int_{N}^{\infty}\frac{dx}{x^2} = \frac{2\alpha}{N\pi^2}.
\end{equation}
Differentiation of  $V_{e}(t)$ and Jensen's inequality lead to
\begin{equation}\label{eq:VeDot}
	\begin{array}{lll}
		&\hspace{-5mm}\dot{V}_{e}+2\delta V_{e} \leq X_e^T(t)\left[P_eF_e+F_e^TP_e+2\delta P_e \right]X_e(t)\\
		&\hspace{-5mm}+2X_e^T(t)P_eG_e\nu_e(t)+2X_e^T(t)P_e\mathcal{L}_e\zeta(t-\frac{r}{M})\\
		&\hspace{-5mm}+2X_e^T(t)P_e\mathcal{L}_eC_1e^{-A_1\frac{r}{M}}e^{N-N_0(t)}+\left| X_e(t)\right|_{S_e}^2-\epsilon_M\times\\
		&\hspace{-5mm}\left[\left|X_e(t)+\nu_e(t) \right|_{S_e}^2+\left|\nu_e(t) \right|_{R_e}^2\right]+\left(\frac{r}{M}\right)^2\left|\dot{X}_e(t) \right|_{R_e}^2.
	\end{array}
\end{equation}
To compensate $\zeta^2(t)$ we use monotonicity of $\lambda_n, \ n\in \mathbb{Z}_+$, \eqref{eq:VnomDelayNew} and \eqref{eq:CrosTermNonDelayedNew} to obtain
\begin{equation}\label{eq:zetaCompensNew}
	\begin{array}{lll}
		&\hspace{-5mm}2\sum_{n=N+1}^{\infty}\left(-\lambda_n+q+\delta +\left[\frac{1}{2\alpha}+\frac{1}{2\alpha_1}\right]\lambda_n\right)z_n^2(t)\\
		&\hspace{-5mm}\overset{\eqref{eq:ZetaEstBoundartActSub}}{\leq} 2\left(-\lambda_{N+1}+q+\delta +\left[\frac{1}{2\alpha}+\frac{1}{2\alpha_1}\right]\lambda_{N+1}\right)\left\|c \right\|_{N}^{-2}\zeta^2(t)
	\end{array}
\end{equation}
provided $-\lambda_{N+1}+q+\delta +\left[\frac{1}{2\alpha}+\frac{1}{2\alpha_1}\right]\lambda_{N+1}\leq 0$. Let
\begin{equation*}\label{eq:etaDelayedSubNew}
	\begin{array}{lll}
		&\eta(t) = \text{col}\{z^{N_0}(t),X_e(t),\nu_e(t),\zeta(t-\frac{r}{M}),e^{N-N_0}(t) \}.
	\end{array}
\end{equation*}
From \eqref{eq:VnomDelayNew}-\eqref{eq:zetaCompensNew} we have
\begin{equation*}\label{eq:FinalIneqNew}
	\begin{array}{ll}
		&\hspace{-5mm}\dot{V}(t)+2\delta V(t)\leq \eta^T(t)\Psi_1 \eta(t) + \frac{2}{\left\|c\right\|_N^2}\Psi_2\zeta^2(t) \leq 0, \ t\geq 0,
	\end{array}
\end{equation*}
provided
\begin{equation*}\label{eq:PointActLMIsDelayedSubNew}
	\begin{array}{lll}
		&\hspace{-3mm}\Psi_2=-\lambda_{N+1}+q+\delta +\left[\frac{1}{2\alpha}+\frac{1}{2\alpha_1}\right]\lambda_{N+1}+\frac{Q\left\|c \right\|_N^{2}}{2} < 0, \\
		&\hspace{-3mm}\Psi_1 = \Psi_{\text{full}}+\left(\frac{r}{M}\right)^2\Lambda^T R_e \Lambda<0.
	\end{array}
\end{equation*}
Here
\begin{equation}\label{eq:PsiMatrix1New}
	\begin{array}{lll}
		&\Psi_{\text{full}} = \left[
		\begin{array}{c|c}
			\Psi_{0} &  \Sigma_1 \\
			\hline
			* &  \Gamma_1
		\end{array}
		\right], \ \Sigma_1 = \scriptsize \begin{bmatrix} 0 \\ 0\\
			P_e\mathcal{L}_eC_1e^{-A_1\frac{r}{M}}\\ 0
		\end{bmatrix}, \\
		&\Gamma_1 =2p_e\left[A_1+\delta I\right], \ \Lambda = [\Lambda_{0}, \mathcal{L}_eC_1e^{-A_1\frac{r}{M}}]
	\end{array}
\end{equation}
with
\begin{equation}\label{eq:PsiMatrix2New}
	\begin{array}{lll}
		&\hspace{-5mm}\Psi_{0} = \scriptsize \left[
		\begin{array}{c|c|c}
			\psi & \begin{matrix}
				P_0B_0\mathcal{K}_e & 0
			\end{matrix} & \begin{matrix}
				0
			\end{matrix} \\
			\hline
			* & \Phi(P_e,S_e,R_e) & P_e\text{col}\left\{\mathcal{L}_e,0\right\}\\
			\hline
			* & * & -\epsilon_MQ
		\end{array}
		\right]\\[0.2cm]
		&\hspace{2mm}+\frac{2}{N\pi^2}\operatorname{diag}\left\{\alpha K_0^TK_0,\alpha_1 \begin{bmatrix}
		\mathcal{K}_e & 0
		\end{bmatrix}^T\begin{bmatrix}
		\mathcal{K}_e & 0
		\end{bmatrix}  ,0\right\},\\
		&\hspace{-5mm} \psi = P_0\left(A_0-B_0K_0 \right)+\left(A_0-B_0K_0 \right)^TP_0+2\delta P_0,\\
		&\hspace{-5mm} \Phi(P_e,S_e,R_e) = \scriptsize\begin{bmatrix}
			P_eF_e+F_e^TP_e+2\delta P_e+(1-\epsilon_M)S & P_eG_e-\epsilon_MS_e\\
			* & -\epsilon_M\left(S_e+R_e \right)\end{bmatrix},\\
		&\hspace{-5mm} \Lambda_0 =[0,F_e,G_e,\mathcal{L}_e].
	\end{array}
\end{equation}
By Schur's complement we have that $\Psi_2<0$ iff
\begin{equation}\label{eq:TailLMI}
	\scriptsize\begin{array}{lll}
		\left[
		\begin{array}{c|c}
			-\lambda_{N+1}+q+\delta+\frac{Q\left\|c \right\|_N^{2}}{2} & 1 \qquad \qquad 1 \\
			\hline
			* &  -2\lambda_{N+1}^{-1}\operatorname{diag}\left\{\alpha,\alpha_1\right\}
		\end{array}
		\right]
	\end{array}\normalsize < 0.\normalsize
\end{equation}
Finally, that  \eqref{eq:N0Sub} yields $\Gamma_1<0$. Therefore, applying Schur complement and taking $p_e\to \infty$ we find that $\Psi_1<0$ iff
\begin{equation}\label{eq:Psireduced}
	\begin{array}{lll}
		&\hspace{-6mm}\Psi_{0} +\left(\frac{r}{M}\right)^2\Lambda_0^T R_e\Lambda_0<0
	\end{array}
\end{equation}
with $\Lambda_0$ given in \eqref{eq:PsiMatrix2New}. Note that \eqref{eq:TailLMI} and \eqref{eq:Psireduced} are reduced-order LMIs whose dimension is independent of $N$. Summarizing, we arrive at:
\begin{theorem}\label{Thm:NonLocalL2Delay}
	Consider \eqref{eq:PDEPointActIntervalDelayedSub}, measurement \eqref{eq:BoundMeasPred} with $c\in L^2(0,1)$ satisfying \eqref{eq:AsscnNonDelayedSub} and control law \eqref{eq:ControllerSub}. Let $\delta>0$ be a desired decay rate. Let $N_0\in \mathbb{Z}_+$ satisfy \eqref{eq:N0Sub} and $N\geq N_0+1$. Assume that $L_0$ and $K_0$ are obtained using \eqref{eq:GainsDesignLSub}  and \eqref{eq:GainsDesignKSub}, respectively. Given $M \in \mathbb{Z}_+, \ M\geq 1$ and $r>0$, let there exist positive definite matrices $P_0,P_e,S_e,R_e$ and scalars $Q, \alpha,\alpha_1>0$ such that \eqref{eq:TailLMI} and \eqref{eq:Psireduced} hold. Then the solution $z(x,t)$ to \eqref{eq:PDEPointActIntervalDelayedSub} under the control law \eqref{eq:ControllerSub} and the corresponding subpredictor-based observer $\hat z(x,t)$ defined by \eqref{eq:obsODENonDelayedSubLastODEs}, \eqref{eq:SubpredODEs} and \eqref{eq:ZhatSeries0Sub}
	satisfy 
	\begin{equation}\label{eq:L2StabilityDelay}
		\begin{aligned}
			&\left\|z(\cdot,t)-\hat{z}(\cdot,t)\right\|+\left\|z(\cdot,t)\right\|\leq De^{-\delta t}\left\|z(\cdot,0)\right\|
		\end{aligned}
	\end{equation}
	for some constant $D>0$.
\end{theorem}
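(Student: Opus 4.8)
The plan is to combine the already-established well-posedness with a direct Lyapunov argument based on the functional $V$ in \eqref{eq:VComponents0New}, and then to translate the resulting decay of the Lyapunov variables into the state-norm estimate \eqref{eq:L2StabilityDelay}. Since the computations \eqref{eq:VQDiffNew}--\eqref{eq:zetaCompensNew} already assemble $\dot V(t)+2\delta V(t)$, the first task is largely bookkeeping: collect the quadratic forms in $\eta(t)=\mathrm{col}\{z^{N_0}(t),X_e(t),\nu_e(t),\zeta(t-\frac rM),e^{N-N_0}(t)\}$, use the Cauchy-Schwarz estimate \eqref{eq:CrosTermNonDelayedNew} for the tail cross-terms and Jensen's inequality for the delayed terms (as in \eqref{eq:VeDot}), and substitute $\dot X_e=\Lambda\eta$ into the $(\frac rM)^2|\dot X_e|_{R_e}^2$ term to produce the rank term $(\frac rM)^2\Lambda^TR_e\Lambda$. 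This yields $\dot V+2\delta V\le \eta^T\Psi_1\eta+\frac{2}{\|c\|_N^2}\Psi_2\zeta^2(t)$. I would then invoke the hypotheses: \eqref{eq:TailLMI} is equivalent to $\Psi_2<0$ by Schur complement, and since \eqref{eq:N0Sub} gives $\Gamma_1=2p_e(A_1+\delta I)<0$, a Schur complement on the $e^{N-N_0}$ block followed by letting $p_e\to\infty$ shows \eqref{eq:Psireduced} implies $\Psi_1<0$. Hence $\dot V+2\delta V\le 0$, so $V(t)\le e^{-2\delta t}V(0)$.

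Second, I would extract decay of the individual variables. Since $P_0,P_e,S_e,R_e>0$ and the integral terms $V_{S_e,X_e},V_{R_e,X_e},V_Q$ and $p_e|e^{N-N_0}|^2$ are nonnegative, $V(t)$ bounds a positive multiple of $|z^{N_0}(t)|^2+|X_e(t)|^2+|e^{N-N_0}(t)|^2+\|z(\cdot,t)\|_N^2$; thus all four decay as $e^{-2\delta t}$. For the observer error I would use the modal decomposition \eqref{eq:zSolDecompSub}, the observer \eqref{eq:ZhatSeries0Sub} and the identity \eqref{eq:NoDelayControlPresent}, which give by orthonormality $\|z(\cdot,t)-\hat z(\cdot,t)\|^2=|\sum_{i=1}^M e_i^{N_0}(t)|^2+|e^{N-N_0}(t)|^2+\|z(\cdot,t)\|_N^2\le M|X_e(t)|^2+|e^{N-N_0}(t)|^2+\|z(\cdot,t)\|_N^2$, so $\|z(\cdot,t)-\hat z(\cdot,t)\|^2\le Ce^{-2\delta t}V(0)$.

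Third, for $\|z(\cdot,t)\|$ I would write $\|z(\cdot,t)\|^2=|z^{N_0}(t)|^2+|z_{\mathrm{true}}^{N-N_0}(t)|^2+\|z(\cdot,t)\|_N^2$ with $z_{\mathrm{true}}^{N-N_0}=\mathrm{col}\{z_{N_0+1},\dots,z_N\}$. The first and last are controlled by $V$, but the middle true modes enter $V$ only through $e^{N-N_0}$; here I would use $z_{\mathrm{true}}^{N-N_0}=e^{N-N_0}+\hat z^{N-N_0}$ and estimate $\hat z^{N-N_0}$ separately. Since $u(t)=-K_0\hat z_1^{N_0}(t)$ and, by \eqref{eq:NoDelayControlPresent}, $|\hat z_1^{N_0}(t-r)|\le|z^{N_0}(t)|+\sqrt M|X_e(t)|$, the control decays as $e^{-\delta t}$. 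Solving \eqref{eq:obsODENonDelayedSubLastODEs} by variation of constants and using that $A_1$ has all eigenvalues $\le-\lambda_{N_0+1}+q<-\delta$ strictly (a spectral gap from \eqref{eq:N0Sub}), the convolution $\int_0^t e^{A_1(t-s)}B_1u(s-r)\,ds$ inherits the rate $e^{-\delta t}$ without a polynomial factor; together with the decay of $e^{N-N_0}$ this gives $|z_{\mathrm{true}}^{N-N_0}(t)|\le Ce^{-\delta t}\|z(\cdot,0)\|$, hence $\|z(\cdot,t)\|\le De^{-\delta t}\|z(\cdot,0)\|$.

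Finally, I would bound $V(0)\le C\|z(\cdot,0)\|^2$: the predictors and $\hat z^{N-N_0}$ vanish for $t\le 0$, so $X_e(0)=\mathrm{col}\{0,\dots,0,z^{N_0}(0)\}$ and $e^{N-N_0}(0)=z_{\mathrm{true}}^{N-N_0}(0)$, while $V_0(0)=|z^{N_0}(0)|^2_{P_0}+\|z(\cdot,0)\|_N^2$, all dominated by $\|z(\cdot,0)\|^2$; the history integrals in $V_{S_e,X_e},V_{R_e,X_e},V_Q$ are likewise controlled through the zero history of $u$. Collecting these bounds yields \eqref{eq:L2StabilityDelay}. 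The main obstacles, as emphasized in Remark~\ref{rem_ODEs}, are the coupling of the finite-dimensional part with the infinite tail through $\zeta(t)$—handled here by $V_Q$ and the compensation \eqref{eq:zetaCompensNew} under $\Psi_2<0$—and the reconstruction of the full norm $\|z(\cdot,t)\|$, since the true middle modes appear in $V$ only via $e^{N-N_0}$ and require the auxiliary spectral-gap estimate for $\hat z^{N-N_0}$ above.
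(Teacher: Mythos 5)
Your proposal is correct and follows essentially the same route as the paper: the paper's proof of this theorem is precisely the Lyapunov/LMI derivation \eqref{eq:VComponents0New}--\eqref{eq:Psireduced} that you reproduce, concluding $\dot V+2\delta V\le 0$ and hence $V(t)\le e^{-2\delta t}V(0)$, with the LMIs \eqref{eq:TailLMI} and \eqref{eq:Psireduced} entering exactly through the two Schur-complement steps you describe. The additional steps you supply --- recovering the middle modes $z_n$, $N_0+1\le n\le N$, through $\hat z^{N-N_0}$ by variation of constants and the spectral gap \eqref{eq:N0Sub}, bounding the observer error via \eqref{eq:NoDelayControlPresent}, and estimating $V(0)\le C\left\|z(\cdot,0)\right\|^2$ from the zero initial histories --- are exactly the bookkeeping that the paper leaves implicit in ``Summarizing, we arrive at,'' and you carry them out correctly.
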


\vspace{0.15cm}
We show next that \eqref{eq:TailLMI} and \eqref{eq:Psireduced} are feasible for any delay $r>0$ provided $M$ and $N$ are large enough. For this purpose consider \eqref{eq:e_M} and
$$V_M(t)=\left|e^{N_0}_M(t) \right|_{P}^2+V_{S,e_M^{N_0}}(t)+V_{R,e_M^{N_0}}(t)$$ with $V_{S,e_M^{N_0}}(t)$, $V_{R,e_M^{N_0}}(t)$ as in \eqref{eq:VComponentsRNew}, where
$0<P,S,R\in \mathbb{R}^{N_0+1}$.
The LMI \begin{equation}\label{eq:LMIeM}
	\begin{array}{lll}
		&\hspace{-4mm}\mathcal{J}(P,S,R)=\scriptsize\begin{bmatrix}
			\phi & -PL_0C_0-e^{-2\delta h}S\\
			* & -e^{-2\delta h}\left(S+R \right)
		\end{bmatrix}\\
		&\hspace{2mm}+h^2 \scriptsize\begin{bmatrix}
			\left(A_0-L_0C_0 \right)^T\\-C_0^TL_0^T\end{bmatrix}\scriptsize R \scriptsize\begin{bmatrix}
			A_0-L_0C_0 & -L_0C_0
		\end{bmatrix}<0,\\
		&\hspace{-4mm}\phi =P(A_0-L_0C_0)+(A_0-L_0C_0)^TP+2\delta P+(1-e^{-2\delta h})S.
	\end{array}
\end{equation}
 with $h={r\over M}$ guarantees $\dot{V}_M(t)+2\delta V_M(t)\leq 0$ along \eqref{eq:e_M}.
  Given $\delta>0$, \eqref{eq:GainsDesignLSub} implies that \eqref{eq:LMIeM} is feasible for small enough $h>0$ (see e.g. \cite{Fridman14_TDS}).

\begin{prop}\label{eq:SubpredGuarantee}
	Given $h>0$, let $0<P,S,R\in \mathbb{R}^{N_0+1}$ such that \eqref{eq:LMIeM} holds.
	Then, given $r>0$ and $M>\frac{r}{h}$, there exists some $N_*$ such that for all $N>N_*$, \eqref{eq:TailLMI} and \eqref{eq:Psireduced} are feasible.
\end{prop}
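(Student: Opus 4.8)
The plan is to reduce the two finite-dimensional conditions \eqref{eq:TailLMI} and \eqref{eq:Psireduced} to their $N\to\infty$ limits and then recover feasibility for large $N$ by continuity, exploiting that every $N$-dependent term either vanishes or improves as $N\to\infty$. Indeed, the correction $\tfrac{2}{N\pi^2}\operatorname{diag}\{\alpha K_0^TK_0,\alpha_1[\mathcal{K}_e\ 0]^T[\mathcal{K}_e\ 0],0\}$ appearing in $\Psi_0$ is positive semidefinite and tends to $0$, while in \eqref{eq:TailLMI} we have $\lambda_{N+1}\to\infty$ and $\|c\|_N^2\to0$. Hence it suffices to exhibit $N$-independent matrices $P_0,P_e,S_e,R_e\succ0$ and scalars $Q,\alpha,\alpha_1>0$, with $\bar h:=r/M$, such that the limiting form of \eqref{eq:Psireduced} (i.e. $\Psi_0$ with the $\tfrac{2}{N\pi^2}$-block deleted, plus $(\bar h)^2\Lambda_0^TR_e\Lambda_0$) is strictly negative definite, and such that the limiting form of \eqref{eq:TailLMI} holds. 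Then $N_*$ is taken so large that the vanishing terms are absorbed by the strict negativity margins, all decision variables being kept fixed.

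The condition \eqref{eq:TailLMI} is the easy one: fixing any $Q>0$ and choosing $\alpha,\alpha_1$ large enough that $\tfrac1{2\alpha}+\tfrac1{2\alpha_1}<1$, a Schur complement on the lower $2\times2$ block shows \eqref{eq:TailLMI} is equivalent to $-\lambda_{N+1}\bigl(1-\tfrac1{2\alpha}-\tfrac1{2\alpha_1}\bigr)+q+\delta+\tfrac{Q\|c\|_N^2}{2}<0$. Since the bracket is positive, $\lambda_{N+1}\to\infty$ and the remaining terms stay bounded (indeed $\|c\|_N^2\to0$), this holds for all large $N$.

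The core is the limiting form of \eqref{eq:Psireduced}, which I would build around the hypothesis \eqref{eq:LMIeM}. Write $\mathcal{J}_{h'}(P,S,R)$ for the left-hand side of \eqref{eq:LMIeM} with $h$ replaced by $h'$. A short computation gives, for $\bar h\le h$,
\[
\mathcal{J}_{h}(P,S,R)-\mathcal{J}_{\bar h}(P,S,R)=(e^{-2\delta\bar h}-e^{-2\delta h})\begin{bmatrix}S&S\\ S&S+R\end{bmatrix}+(h^2-\bar h^2)\begin{bmatrix}(A_0-L_0C_0)^T\\ -C_0^TL_0^T\end{bmatrix}R\begin{bmatrix}A_0-L_0C_0& -L_0C_0\end{bmatrix}\succeq0,
\]
both summands being positive semidefinite (the first since $\left[\begin{smallmatrix}S&S\\ S&S+R\end{smallmatrix}\right]\succ0$ by a Schur complement). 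As $M>r/h$ forces $\bar h=r/M<h$, this yields $\mathcal{J}_{\bar h}(P,S,R)\preceq\mathcal{J}_h(P,S,R)\prec0$, so the same $P,S,R$ certify the error subsystem at the smaller step $\bar h$. I then set $P_e=\operatorname{diag}\{\gamma^iP\}_{i=1}^M$, $S_e=\operatorname{diag}\{\gamma^iS\}_{i=1}^M$, $R_e=\operatorname{diag}\{\gamma^iR\}_{i=1}^M$ with a scalar $\gamma>1$ to be fixed, and denote $\Phi':=\Phi(P_e,S_e,R_e)+(\bar h)^2[F_e,G_e]^TR_e[F_e,G_e]$. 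With this choice each diagonal $(e_i,\nu_{e,i})$-block of $\Phi'$ equals $\gamma^i\mathcal{J}_{\bar h}(P,S,R)$ plus a positive semidefinite term of weight $\gamma^{i-1}$ coming from the cascade coupling $J_{M-1}(0)\otimes L_0C_0$, while all off-diagonal blocks linking neighbouring groups scale like $\gamma^i$. Applying the congruence $T=\operatorname{diag}\{\gamma^{i/2}I\}_{i=1}^M$ (after permuting to group $(e_i,\nu_{e,i})$) normalizes every diagonal block to $\mathcal{J}_{\bar h}(P,S,R)+O(\gamma^{-1})\prec0$ and shrinks every off-diagonal block to $O(\gamma^{-1/2})$; hence $\Phi'\prec0$, indeed $\Phi'\preceq-c\gamma I$ for a fixed $c>0$, once $\gamma$ is large.

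It then remains to reinstate the $z^{N_0}$ and $\zeta$ couplings. Choosing $P_0$ from \eqref{eq:GainsDesignKSub} makes $\psi\prec0$; a Schur complement of the limiting $\Psi_0$ with respect to $\psi$ leaves $\Phi'$ perturbed only by the fixed positive semidefinite matrix $[P_0B_0\mathcal{K}_e\ 0]^T(-\psi)^{-1}[P_0B_0\mathcal{K}_e\ 0]$, which is dominated by $\Phi'\preceq-c\gamma I$ for $\gamma$ large. Finally, Schur-complementing the $\zeta$-row (whose diagonal entry is $-\epsilon_MQ+(\bar h)^2\mathcal{L}_e^TR_e\mathcal{L}_e$) produces a correction that tends to $0$ as $Q\to\infty$, so taking $Q$ large (with $\gamma$ already fixed) yields the strict limiting inequality; choosing $\alpha,\alpha_1$ as above and $N_*$ large finishes the argument by continuity. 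The main obstacle is precisely this cascade step, $\Phi'\prec0$: controlling all the delay-dependent Jensen cross-terms simultaneously, and—just as importantly—ordering the parameter choices ($\gamma$, then $Q$, then $\alpha,\alpha_1$, and only last $N_*$) so that no later choice reopens an earlier inequality. The weighted congruence is what renders the coupling between neighbouring sub-predictors $O(\gamma^{-1/2})$-small, decoupling the chain in the limit and mirroring the ISS cascade description of Remark~\ref{rem_ODEs}.
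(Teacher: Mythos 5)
Your proposal is correct and takes essentially the same route as the paper's own proof: both reduce the problem to certifying the cascade inequality \eqref{eq:PhiE} with geometrically weighted block-diagonal $P_e,S_e,R_e$ built from the single-link certificate $\mathcal{J}(P,S,R)<0$ of \eqref{eq:LMIeM} (valid at step $r/M<h$ by monotonicity in $h$, which you prove and the paper merely asserts), and then absorb the $z^{N_0}$- and $\zeta$-couplings and the vanishing $O(1/N)$ terms via Schur complements, choosing the chain weight large, then $Q$ large, then $N$ large. The only differences are cosmetic: you obtain the chain estimate in one shot by a weighted congruence where the paper runs a backward induction with rescaling $\mu$ (plus a global rescaling $\beta$ playing the role of your large $\gamma$), and you keep $Q$ independent of $N$ where the paper simply sets $Q=N$.
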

\begin{proof}
	We first show that there exist $0<P_e,S_e,R_e\in \mathbb{R}^{M(N_0+1)\times M(N_0+1)}$ such that
	\begin{equation}\label{eq:PhiE}
		\Phi(P_e,S_e,R_e)+\left(\frac{r}{M}\right)^2 \begin{bmatrix}
			F_e^T\\ G_e^T
		\end{bmatrix} R_e \begin{bmatrix}F_e & G_e \end{bmatrix} <0
	\end{equation}
	with $\Phi(P_e,S_e,R_e)$ given in \eqref{eq:PsiMatrix2New}. Consider the ODE
	\begin{equation}\label{eq:NominErr}
		\dot{X}_e= F_eX_e(t)+G_e \nu_e(t)
	\end{equation}
	obtained from \eqref{eq:XMDelayNew} by setting $\zeta(t)\equiv 0$ and $e^{N-N_0}(t)\equiv0$. For $V_e(t)$, given in \eqref{eq:VComponents0New}, by standard arguments it can be easily verified that \eqref{eq:PhiE} guarantees $\dot{V}_e(t)+2\delta V_e(t)\leq 0$. We will construct $V_e(t)$ recursively, by using $P$, $S$ and $R$, thereby obtaining $P_e$, $S_e$ and $R_e$. First, consider the ODE  \eqref{eq:e_M}.
	Since $\frac{r}{M}<h$, \eqref{eq:LMIeM} holds with $h$ replaced by $\frac{r}{M}$. Next, consider \eqref{eq:e_M} and the ODE of $e_{M-1}^{N_0}(t)$ in \eqref{eq:NominErr}:
	\begin{equation}\label{eq:e_M-1}
		\begin{array}{lll}
			&\hspace{-4mm}\dot{e}_{M-1}^{N_0}(t) = (A_0-L_0C_0)e_{M-1}^{N_0}(t)-L_0C_0\left(\nu_{e,M-1}(t)-e^{N_0}_M(t)\right)
		\end{array}
	\end{equation}
	Let $\mu>0$ and define
	\begin{equation*}
		V_{M-1}(t) = \left|e^{N_0}_{M-1}(t) \right|_{P}^2+V_{S,e_{M-1}^{N_0}}(t)+V_{R,e_{M-1}^{N_0}}(t)+\mu V_M(t)
	\end{equation*}
	where $V_M(t)$ is \emph{rescaled by $\mu$}. Using \eqref{eq:e_M} and \eqref{eq:e_M-1}, the following LMI guarantees $\dot{V}_{M-1}(t)+2\delta V_{M-1}(t)\leq 0$:
	\begin{equation}\label{eq:LMIM-1}
		\scriptsize \left[
		\begin{array}{c|c}
			\mathcal{J}(P,S,R) & \begin{matrix}
				PL_0C_0+\left(\frac{r}{M}\right)^2(A_0-L_0C_0)^TRL_0C_0\qquad & 0\\-\left(\frac{r}{M}\right)^2C_0^TL_0^TRL_0C_0 \qquad & 0
			\end{matrix}  \\
			\hline
			* & \mu \mathcal{J}(P,S,R)+ \left(\frac{r}{M} \right)^2\begin{bmatrix}C_0^TL_0^TRL_0C_0 & 0 \\ 0 & 0 \end{bmatrix}
		\end{array}
		\right]\normalsize<0.
	\end{equation}
	Since $\mathcal{J}(P,S,R)<0$, taking $\mu$ large enough and applying Schur complement it can be seen that \eqref{eq:LMIM-1} holds. Repeating these arguments by backward induction, i.e choosing
	\begin{equation}
		\begin{array}{lll}
			&V_{M-j}(t) = \left|e^{N_0}_{M-j}(t) \right|_{P}^2+V_{S,e_{M-j}^{N_0}}(t)+V_{R,e_{M-j}^{N_0}}(t)\\
			&\hspace{10mm}+\mu V_{M-j+1}(t), \quad 2\leq j\leq M-1
		\end{array}
	\end{equation}
	and increasing $\mu$ at each step, we obtain that \eqref{eq:PhiE} holds with $W_e = \operatorname{diag}\left\{\mu^{j}W\right\}_{j=0}^{M-1}, \ W\in \left\{P,S,R\right\}$. Next, recall \eqref{eq:TailLMI} and \eqref{eq:Psireduced}, with $\Psi_0$ given in \eqref{eq:PsiMatrix2New}. Set $\alpha = \alpha_1=2$ and let $\beta>0$. Rescaling, we replace $\Phi(P_e,S_e,R_e)$ with $\Phi(\beta P_e,\beta S_e,\beta R_e)=\beta \Phi(P_e,S_e,R_e)$. Let $P_0=P_c$, given in \eqref{eq:GainsDesignKSub}, resulting in $\psi<0$ in \eqref{eq:PsiMatrix2New}. Setting $\beta>0$ to be large enough, then choosing $Q=N$ large enough and applying Schur complement twice in \eqref{eq:Psireduced}, we find that \eqref{eq:TailLMI} and \eqref{eq:Psireduced} hold.
\end{proof}
\subsection{Classical observer-based predictor}
For the case of a classical predictor, we consider a $N+1$ dimensional observer of the form
\begin{equation}\label{eq:ClassPredObs}
	\begin{array}{lll}
		&\hat{z}(x,t)= \hat{z}^{N_0}(t)\cdot \text{col}\left\{\phi_0(x),\dots,\phi_{N_0}(x)\right\}\\
		&\hspace{8mm}+\hat{z}^{N-N_0}(t)\cdot\text{col}\left\{\phi_{N_0+1}(x),\dots,\phi_{N}(x)\right\}.
	\end{array}
\end{equation}
Here $\hat{z}^{N-N_0}(t)$ is defined in \eqref{eq:ObsLastModesSub} and satisfies \eqref{eq:obsODENonDelayedSubLastODEs}, whereas $\hat{z}^{N_0}(t)$ satisfies the following ODE
\begin{equation}\label{eq:HatzN0ClassPred}
	\begin{array}{lll}
		&\dot{\hat{z}}^{N_0}(t)=A_0\hat{z}^{N_0}(t)+B_0u(t-r)\\
		&\hspace{5mm}-L_0\left[C_0\hat{z}^{N_0}\left(t\right)+C_1\hat{z}^{N-N_0}(t)-y(t) \right],\ t\geq 0,\\
		&\hat{z}^{N_0}(t) = 0, \ t\leq 0.
	\end{array}
\end{equation}
Recall $e^{N-N_0}(t)$ given in \eqref{eq:EstErrSub} and satisfying \eqref{eq:ErrEqSub1}. Define $e^{N_0}(t)=z^{N_0}(t)-\hat{z}^{N_0}(t)$, where $z^{N_0}(t)$ is given in
\eqref{eq:ObsLastModesSub}. The innovation term in \eqref{eq:HatzN0ClassPred} can be presented as
\begin{equation*}\label{eq:InnovTermSubClassPred}
	\begin{array}{lll}
		&C_0\hat{z}^{N_0}\left(t\right)+C_1\hat{z}^{N-N_0}(t)-y(t)\\
		&\hspace{10mm} = -C_0e^{N_0}(t)-C_1e^{N-N_0}(t)-\zeta(t)
	\end{array}
\end{equation*}
with $\zeta(t)$, given in \eqref{eq:InnovTermSub}, subject to \eqref{eq:ZetaEstBoundartActSub}. Using these notations with \eqref{eq:znDelayedSub} and \eqref{eq:HatzN0ClassPred}, we obtain
\begin{equation}\label{eq:ODEseN0}
	\begin{array}{lll}
		\dot{e}^{N_0}(t)=\left(A_0-L_0C_0\right)e^{N_0}(t)-L_0C_1e^{N-N_0}(t)-L_0\zeta(t).
	\end{array}
\end{equation}
As in \cite{katzreduced21}, we propose the  predictor-based control law
\begin{equation}\label{eq_Pred}
	\begin{array}{lll}
		&\bar{z}(t)=e^{A_0r}\hat {z}^{N_0}(t)+\int_{t-r}^te^{A_0(t-s)}B_0u(s)ds,\ u(t)=-K_0\bar {z}(t)
	\end{array}
\end{equation}
Note that exponential decay of $\bar{z}(t)$ implies exponential decay of $\hat{z}^{N_0}(t)$ with the same decay rate. Differentiating $\bar {z}(t)$ and using \eqref{eq:HatzN0ClassPred} and \eqref{eq_Pred} we obtain
\begin{equation}\label{ep18e}
	\begin{array}{ll}
		&\hspace{-2mm}\dot{\bar{z}}(t)=\left(A_0-B_0K_0\right)\bar {z}(t)+e^{A_0r}L_0\\
		&\hspace{5mm}\times\Big[C_0e^{N_0}(t)
		+C_1e^{N-N_0}(t)+\zeta(t)\Big], \ t\geq 0 .
	\end{array}
\end{equation}
Then, the reduced-order (decoupled from $\hat{z}^{N-N_0}(t)$) closed-loop system is given by non-delayed ODEs \eqref{eq:ErrEqSub1}, \eqref{eq:ODEseN0}, \eqref{ep18e} and
the tail
\begin{equation}\label{eq:ClassPredTail}
	\begin{array}{lll}
		&\hspace{-5mm}\dot{z}_n(t) =(-\lambda_n+q)z_n(t)-b_nK_0\bar{z}(t-r),\ n>N
	\end{array}
\end{equation}
which depends on the delay. {\color{blue} Note also that in the case of state-feedback (see e.g. \cite{lhachemi2019lmi}), the predictor is given by \eqref{eq_Pred} with $\hat z^{N_0}$ changed by  $z^{N_0}$ leading to decoupled from the tail ODE \eqref{ep18e} with $L_0=0$. The latter simplifies the stability analysis of the closed-loop system and makes the proof of LMI feasibility trivial.}
Next, we consider $L^2$-stability analysis of the closed-loop system, which is delay-independent for $\delta=0$.
Define the Lyapunov functional
\begin{equation}\label{eq:LyapClassPred}
	\begin{array}{lll}
		&\hspace{-2mm}\bar{V}(t) = \bar{V}_0(t)+\left|e^{N_0}(t)\right|_{P_e}^2+\int_{t-r}^te^{-2\delta (t-s)}\left|\bar{z}(s) \right|_S^2 ds,\\
		&\hspace{-2mm}\bar{V}_0(t)= \left|\bar{z}(t)\right|_{P_0}^2+\sum_{n=N+1}^{\infty}z_n^2(t)+p_e\left|e^{N-N_0}(t)\right|^2,
	\end{array}
\end{equation}
where $P_0,P_e,S>0$ are matrices of appropriate dimensions and $p_e>0$ is a scalar. By arguments similar to \eqref{eq:VnomDelayNew}-\eqref{eq:Psireduced} the following LMIs guarantee $\dot{\bar{V}}+2\delta \bar{V}\leq 0$ for $p_e\to \infty$:
\begin{equation}\label{eq:FullLMIClassPres}
	\begin{array}{lll}
&-\epsilon_rS+\frac{2\alpha}{N\pi^2}K_0^TK_0<0,\\[0.1cm]
		&\left[
		\begin{array}{c|c|c}
			\psi+S & P_0e^{A_0r}L_0C_0
			& P_0e^{A_0r}L_0 \\
			\hline
			* & \psi_1 & -P_eL_0\\
			\hline
			* &  * & \psi_2
		\end{array}
		\right]<0,\\[0.3cm]
		&\psi_1 = P_e(A_0-L_0C_0)+(A_0-L_0C_0)^TP_e + 2\delta P_e,\\
		&\psi_2 = \frac{2}{\left\|c\right\|_N^2}\left[-\lambda_{N+1}+q+\delta + \frac{1}{2\alpha}\lambda_{N+1} \right], \ \epsilon_r = e^{-2\delta r},
	\end{array}
\end{equation}
where $\psi$ is given in \eqref{eq:PsiMatrix2New}. Fix any $r>0$. Let $\alpha = 1$, $S = \frac{1}{\sqrt{N}}I$, $P_0$ such that $\psi = -2I$ and $P_e$ such that $\psi_1 = -\beta I$ for $\beta>0$. Choosing first $\beta>0$ large enough and then $N$ large enough and applying Schur complement, it can be verified that \eqref{eq:FullLMIClassPres} holds. Summarizing, we arrive at
\begin{prop}\label{prop:NonLocalL2DelayClassPred}
	Consider \eqref{eq:PDEPointActIntervalDelayedSub}, measurement \eqref{eq:BoundMeasPred} with $c\in L^2(0,1)$ satisfying \eqref{eq:AsscnNonDelayedSub} and control law \eqref{eq_Pred}. Let $\delta>0$ be a desired decay rate. Let $N_0\in \mathbb{Z}_+$ satisfy \eqref{eq:N0Sub} and $N\geq N_0+1$. Let $L_0$ and $K_0$ be obtained using \eqref{eq:GainsDesignLSub}  and \eqref{eq:GainsDesignKSub}, respectively. Given $r>0$, let there exist positive definite matrices $P_0,P_e,S$ and scalar $\alpha>0$ such that LMIs \eqref{eq:FullLMIClassPres} hold. Then the solution $z(x,t)$ to \eqref{eq:PDEPointActIntervalDelayedSub} under the control law \eqref{eq_Pred} and the observer $\hat z(x,t)$ defined by \eqref{eq:ClassPredObs} satisfy \eqref{eq:L2StabilityDelay} for some constant $D>0$. Furthermore, given any $r>0$, the LMI \eqref{eq:FullLMIClassPres} is feasible provided $N$ is large enough.
\end{prop}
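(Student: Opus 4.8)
The plan is to mirror the Lyapunov analysis already developed for the sub-predictor (Theorem~\ref{Thm:NonLocalL2Delay}), adapting it to the decoupled closed-loop system \eqref{eq:ErrEqSub1}, \eqref{eq:ODEseN0}, \eqref{ep18e}, \eqref{eq:ClassPredTail}, and then to certify \eqref{eq:FullLMIClassPres} by an explicit choice of decision variables. For the stability claim I would first establish well-posedness exactly as in the sub-predictor case, by the step method on $\{[jr,(j+1)r]\}$ with Theorems 6.3.1 and 6.3.3 of \cite{pazy1983semigroups}, using $u\equiv0$ and $f\equiv0$ on $[0,r]$. I would then differentiate $\bar V$ in \eqref{eq:LyapClassPred} along the trajectories: the derivative of $\bar V_0$ produces the block $\psi$ on $\bar z$ together with the couplings $P_0e^{A_0r}L_0C_0$, $P_0e^{A_0r}L_0$, $P_0e^{A_0r}L_0C_1$ of $\bar z$ to $e^{N_0}$, $\zeta$ and $e^{N-N_0}$; the derivative of $|e^{N_0}|_{P_e}^2$ gives $\psi_1$ with $-P_eL_0$ and $-P_eL_0C_1$; and the Krasovskii integral contributes $|\bar z(t)|_S^2-\epsilon_r|\bar z(t-r)|_S^2$.

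The only infinite-dimensional coupling is through the tail, whose derivative contains $-2\sum_{n>N}z_nb_nK_0\bar z(t-r)$. I would bound this by Cauchy--Schwarz as in \eqref{eq:CrosTermNonDelayedNew}--\eqref{eq:CrossTermsNew} with a single parameter $\alpha$, splitting it into $\tfrac1\alpha\sum_{n>N}\lambda_nz_n^2$, absorbed by the negative tail modes via \eqref{eq:zetaCompensNew} and \eqref{eq:ZetaEstBoundartActSub} to produce $\psi_2$, and $\tfrac{2\alpha}{N\pi^2}|K_0\bar z(t-r)|^2$, absorbed by $-\epsilon_r|\bar z(t-r)|_S^2$, which is precisely the first inequality in \eqref{eq:FullLMIClassPres}. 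The cross terms with $e^{N-N_0}$, whose own dynamics $\dot e^{N-N_0}=A_1e^{N-N_0}$ yield the strictly negative diagonal $2p_e(A_1+\delta I)<0$ by \eqref{eq:N0Sub}, are dominated as $p_e\to\infty$, leaving the reduced $3\times3$ LMI of \eqref{eq:FullLMIClassPres}. Hence $\dot{\bar V}+2\delta\bar V\le0$ and $\bar V(t)\le e^{-2\delta t}\bar V(0)$. To recover \eqref{eq:L2StabilityDelay} I would read off exponential decay of $\bar z$, $e^{N_0}$, $e^{N-N_0}$ and $\sum_{n>N}z_n^2$ from $\bar V$; decay of $\hat z^{N_0}$ then follows from \eqref{eq_Pred} with $u=-K_0\bar z$, decay of $\hat z^{N-N_0}$ from the Hurwitz ODE \eqref{eq:obsODENonDelayedSubLastODEs}, and thus decay of $z^{N_0}=e^{N_0}+\hat z^{N_0}$ and of the middle modes; finally $\bar V(0)\le C\|z(\cdot,0)\|^2$ because $\hat z^{N_0}(0)=\hat z^{N-N_0}(0)=0$ and $\bar z\equiv0$ on $[-r,0]$.

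For feasibility, fix $r>0$ and set $\alpha=1$, $S=N^{-1/2}I$. Since $A_0-B_0K_0$ and $A_0-L_0C_0$ have decay rate $\delta$ by \eqref{eq:GainsDesignKSub} and \eqref{eq:GainsDesignLSub}, the Lyapunov equations $\psi=-2I$ and $\psi_1=-\beta I$ are solvable for a fixed $P_0$ and for $P_e=\beta\tilde P_e$ with $\tilde P_e$ fixed, for every $\beta>0$. The first LMI becomes $-\epsilon_rN^{-1/2}I+\tfrac{2}{N\pi^2}K_0^TK_0<0$, which holds for large $N$ since $N^{-1/2}$ dominates $N^{-1}$. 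For the block LMI I would apply Schur complement twice. Eliminating the tail block first: by \eqref{eq:ZetaEstBoundartActSub}, $\lambda_{N+1}\to\infty$ and $\|c\|_N\to0$ give $\psi_2\to-\infty$, and the induced rank-one correction on the $e^{N_0}$ block is of order $\beta^2/|\psi_2|$, which for fixed $\beta$ tends to $0$ as $N\to\infty$, as do its $N$-dependent off-diagonal corrections. Eliminating the $\psi_1=-\beta I$ block next, the correction on the $\bar z$ block is of order $1/\beta$ times the fixed matrix $(P_0e^{A_0r}L_0C_0)(\,\cdot\,)^T$, leaving $(\psi+S)+O(1/\beta)\approx-2I+N^{-1/2}I$, negative once $\beta$ is large.

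The hard part will be the competition between $\beta$ and $N$ in this double reduction. Taking $\beta$ large is what lets the strongly negative $\psi_1=-\beta I$ swallow the fixed $\bar z$--$e^{N_0}$ coupling $P_0e^{A_0r}L_0C_0$; but enlarging $\beta$ simultaneously enlarges the coupling $-P_eL_0=-\beta\tilde P_eL_0$ of $e^{N_0}$ into the tail, so the correction on the $e^{N_0}$ block scales like $\beta^2/|\psi_2|$ and is controlled only by making $|\psi_2|$ large, i.e. by taking $N$ large \emph{depending on} $\beta$. The order of quantifiers---first $\beta$, then $N$---is therefore the crux; once it is fixed, every remaining estimate is routine. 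It is worth noting that in the state-feedback case $L_0=0$ decouples the tail from \eqref{ep18e} and removes $-P_eL_0$ altogether, which is exactly why feasibility is trivial there but genuinely requires this argument here.
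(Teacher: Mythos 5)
Your proposal is correct and takes essentially the same route as the paper: the same Lyapunov functional \eqref{eq:LyapClassPred}, the same derivation of \eqref{eq:FullLMIClassPres} (tail cross-term split via \eqref{eq:CrossTermsNew}, $e^{N-N_0}$ eliminated by $p_e\to\infty$), and the same feasibility choices $\alpha=1$, $S=N^{-1/2}I$, $P_0$ with $\psi=-2I$, $P_e$ with $\psi_1=-\beta I$, followed by Schur complements. Your explicit tracking of the quantifier order --- the $\beta^{2}/\left|\psi_2\right|$ correction forcing $N$ to be chosen after $\beta$ --- is precisely what the paper compresses into ``choosing first $\beta>0$ large enough and then $N$ large enough.''
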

\subsection{Numerical example}
We consider \eqref{eq:PDEPointActIntervalDelayedSub} with $q=3$, resulting in an unstable open-loop system. We consider \eqref{eq:BoundMeasPred} with $c(x)=\chi_{[0.3,0.6]}$ (an indicator function). 
We fix $\delta = 0.1$ which results in $N_0=0$.
The controller and observer gains, given by $K_0 = 8.8, \ L_0=14.66$,  are found using \eqref{eq:GainsDesignLSub} and \eqref{eq:GainsDesignKSub}, respectively.

We start with sub-predictors. Given various values of $r>0$, the LMIs of Theorem \ref{Thm:NonLocalL2Delay} were verified for $1\leq N+M\leq 110$ and $1\leq M \leq 20$ by using the standard Matlab LMI toolbox. Table \ref{Tab:Subpred} presents the minimal values of $N$ and $M$
found to guarantee the feasibility (i.e. the exponential stability of the closed-loop system with a decay rate $0.1$). For classical predictors, the LMIs of Proposition \ref{prop:NonLocalL2DelayClassPred} were verified for $1\leq N\leq 100$. Table \ref{Tab:Classpred} presents the minimal values of $N$  which guarantee feasibility of the LMIs. It is seen from the tables that for the same values of $r$, the classical predictor employs a lower-order $N+1$-dimensional observer compared to $(M-1)(N_0+1)+N+1$-dimensional sub-predictors.
Moreover, the simple LMIs for a classical predictor can be verified for any $r$ (with corresponding huge $N$). This is different from sub-predictors, where the  LMIs dimension and the number of  decision variables grow with $M$. This leads to the difficulty of the LMIs verification for large $M$ in Matlab. However, it is well-known that a classical predictor is less friendly for implementation. {\color{blue} The controller uses significant memory as it requires the history of control signals, which turns out to be expensive in implementation. Moreover, one has to carefully discretize the integral term to avoid instabilities. Since a possible instability
	is related to the finite-dimensional part, we suggest to use the same implementation methods as used for ODEs with the classical predictor (see \cite{karafyllis2017predictor} with e.g. sampled-data implementation).}
\vspace{-0.15cm}
\begin{table}[h]
	\begin{center}
		\footnotesize\begin{tabular}{|c|c|c|c|c|c|c|c|}
			\hline
			$r$ & 0.2 &0.4 & 0.6 & 0.8 & 1 & 1.1 & 1.2  \\	
			\hline
			$N$ &4 & 6 & 12 & 22 & 41 & 62 & 90 \\
			\hline
			$M$ & 1 &4 & 7 & 10 & 12 & 13 & 18 \\\hline					
		\end{tabular}
	\end{center}
	\vspace{-0.3cm}
	\caption{\label{Tab:Subpred} Sub-predictors: minimal $N$ and $M$ for  feasibility.}
\end{table}
\vspace{-1cm}
\begin{table}[h]
	\begin{center}
		\footnotesize\begin{tabular}{|c|c|c|c|c|c|c|c|}
			\hline
			$r$ & 0.5 &1 & 1.5 & 2 & 2.3 & 2.5 & 2.8  \\	
			\hline
			$N$ &7 & 12 & 19 & 34 & 42 & 58 & 88 \\
			\hline 					
		\end{tabular}
	\end{center}
	\vspace{-0.3cm}
	\caption{\label{Tab:Classpred} Classical predictor: minimal $N$ for feasibility.}
\end{table}

We carry out simulations of the closed-loop systems. We begin with sub-predictors. Recall the closed-loop system \eqref{eq:obsODENonDelayedSubLastODEs}, \eqref{eq:ControllerSub} and \eqref{eq:XMDelayNew}. We choose the initial condition
\begin{equation}\label{eq:InitCond}
z(x,0)=30x^2(1-x^2), \quad x\in [0,1].
\end{equation}
We consider $r\in \left\{0.6,0.7,0.8\right\}$, $N=12$ and $M=7$. We truncate the tail ODEs after $48$ coefficients and use the approximations $\left\|z(\cdot,t)\right\|_{L^2}^2\approx \sum_{n=0}^{60}z_n^2(t)$,  $\zeta(t)\approx \sum_{n=20}^{60}c_nz_n(t)$. The simulations results are presented in Figure \ref{fig:SimSubspred}.
\begin{figure}
	\centering
	\includegraphics[width=65mm,scale=0.11]{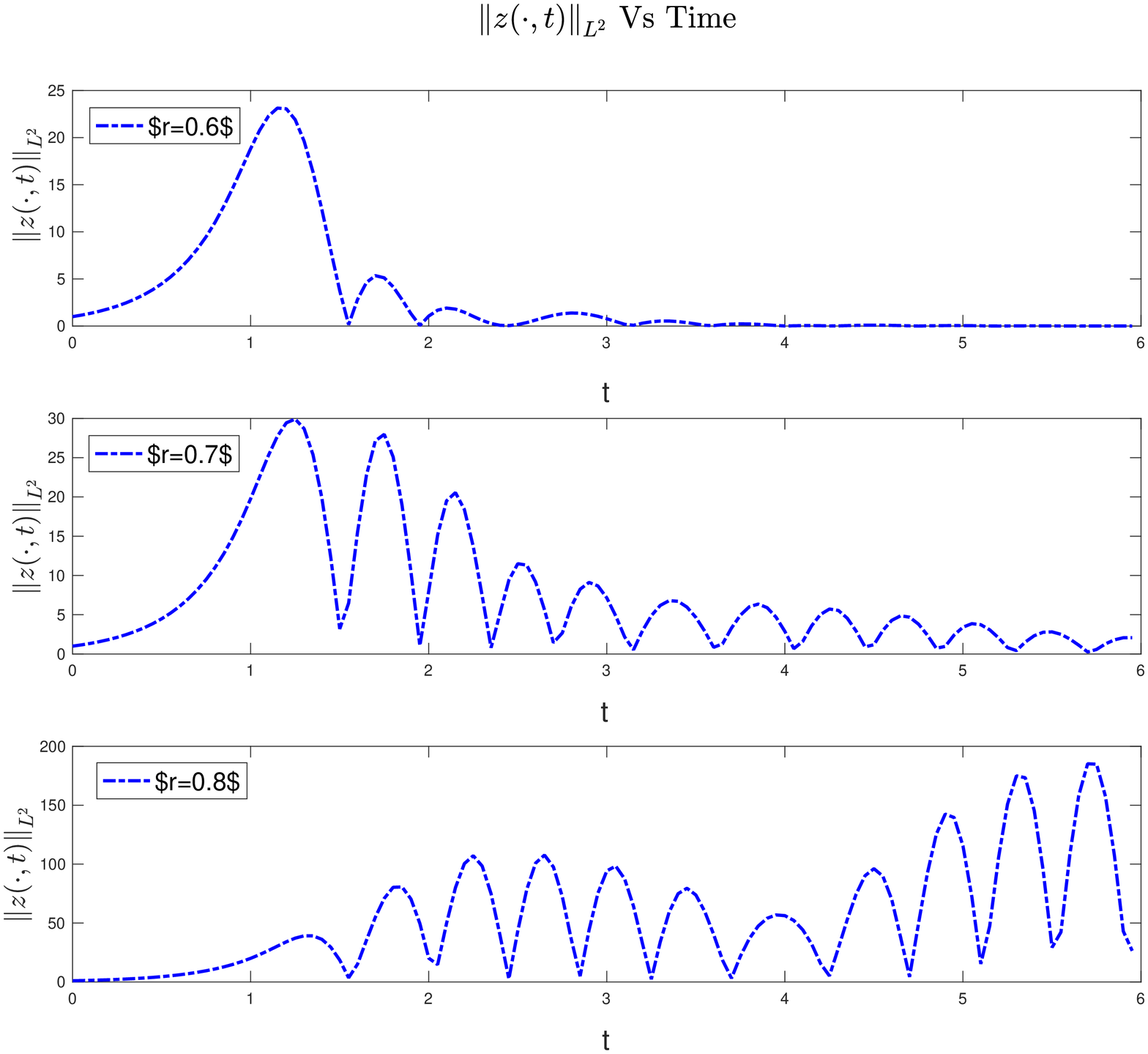}
	\caption{Closed-loop simulations: Sub-Predictors.}\label{fig:SimSubspred}
\end{figure}
For $r=0.6$, $N=12$ and $M=7$, obtained from LMIs (see Table \ref{Tab:Subpred}), the simulations of the closed-loop system confirm the theoretical results. Simulation for larger $r=0.7$ show ultimate boundedness with slow convergence. Simulation with $r=0.8$ shows instability, meaning that our LMIs are only slightly conservative.

Next, we consider classical predictor. Recall the closed-loop system \eqref{eq:obsODENonDelayedSubLastODEs}, \eqref{eq:ErrEqSub1}, \eqref{eq:ODEseN0}, \eqref{eq_Pred} and \eqref{ep18e}. We choose the initial condition \eqref{eq:InitCond}. Let $r\in \left\{1.5,2.5,3\right\}$ and $N=19$. We truncate the tail ODEs after $40$ coefficients and use the approximations $\left\|z(\cdot,t)\right\|_{L^2}^2\approx \sum_{n=0}^{60}z_n^2(t)$,  $\zeta(t)\approx \sum_{n=20}^{60}c_nz_n(t)$. The simulation results are presented in Figure \ref{fig:SimClasspred}. For $r=1.5$ and $N=19$, obtained from LMIs (see Table \ref{Tab:Classpred}), the simulation confirms the theoretical results. For larger $r=2.5$, simulation shows ultimate boundedness, whereas for $r=3$ the closed-loop system is unstable.  For the classical predictor, the lower-order LMIs appeared to be more conservative compared to simulation results. We believe that this is due to the cross-term $e^{A_0r} =e^{3r}$ which couples $\bar{z}(t)$ with $\zeta(t)$ (see \eqref{ep18e}) and becomes huge for larger r. In the LMIs 
\eqref{eq:FullLMIClassPres}, $e^{A_0r}$ appears off-diagonal, leading to difficulty in verifying the feasibility in Matlab.
\begin{figure}
	\centering
	\includegraphics[width=65mm,scale=0.11]{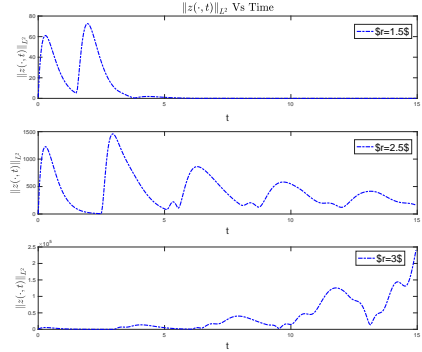}
	\caption{Closed-loop simulations: Classical Predictor.}\label{fig:SimClasspred}
\end{figure}

\section{Conclusion}
We studied constant input delay compensation by finite-dimensional observer-based controllers for the 1D heat equation. We proved that both sub-predictors and classical predictors
theoretically compensate any delay provided the observer dimension  is large. \textcolor{blue}{Classical predictors are known to be less friendly in application to uncertain systems (see e.g. Remark 3 in \cite{zhu2021sub})}. The suggested predictor methods can be
extended in the future to various parabolic PDEs. 

\bibliographystyle{IEEEtran}
\bibliography{IEEEabrv,Bibliography021218}

\end{document}